\numberwithin{equation}{section}
\DeclareMathOperator{\td}{d}
\theoremstyle{plain}
\newtheorem{theorem}{Theorem}[section]
\newtheorem{corollary}{Corollary}[theorem]
\theoremstyle{remark}
\newtheorem{remark}{Remark}[section]
\begin{document}

\title[Extended fermionic $p$-adic integrals on $\mathbb{Z}_p$]
{Extended fermionic $p$-adic integrals on $\mathbb{Z}_p$}

\author[F. Qi]{Feng Qi}
\address[Qi]{Department of Mathematics\\ College of Science\\ Tianjin Polytechnic University\\ Tianjin City, 300387\\ China}
\email{\href{mailto: F. Qi <qifeng618@gmail.com>}{qifeng618@gmail.com}, \href{mailto: F. Qi <qifeng618@hotmail.com>}{qifeng618@hotmail.com}, \href{mailto: F. Qi <qifeng618@qq.com>}{qifeng618@qq.com}}
\urladdr{\url{http://qifeng618.wordpress.com}}

\author[S. Araci]{Serkan Araci}
\address[Araci]{Department of Mathematics\\ Faculty of Science and Arts\\ University of Gaziantep\\ 27310 Gaziantep\\ Turkey}
\email{\href{mailto: S. Araci <mtsrkn@hotmail.com>}{mtsrkn@hotmail.com}}

\author[M. Acikgoz]{Mehmet Acikgoz}
\address[Acikgoz]{Department of Mathematics\\ Faculty of Science and Arts\\ University of Gaziantep\\ 27310 Gaziantep\\ Turkey}
\email{\href{mailto: M. Acikgoz <acikgoz@gantep.edu.tr>}{acikgoz@gantep.edu.tr}}

\begin{abstract}
In the paper, using the extended fermionic $p$-adic integral on $\mathbb{Z}_p$, the authors find some applications of the umbral calculus. From these applications, the authors derive some identities on the weighted Euler numbers and polynomials. In other words, the authors investigate systematically the class of Sheffer sequences in connection with the generating function of the weighted Euler polynomials.
\end{abstract}

\subjclass[2010]{Primary 11B68; Secondary 11S80}

\keywords{Appell sequence, Sheffer sequence, Euler number, Euler polynomial, formal power series, fermionic $p$-adic integral on $\mathbb{Z}_p$, umbral calculus}

\maketitle

\section{{Preliminaries}}

Let  $\mathbb{C}$ denote the set of complex numbers, $\mathcal{F}$ the set of all formal power series in $t$ over $\mathbb{C}$ with
\begin{equation}
\mathcal{F}=\Biggl\{ f(t)=\sum_{k=0}^\infty a_k\frac{t^k}{k!}\Bigg|a_k\in\mathbb{C}\Biggr\},
\end{equation}
$\mathcal{P}=\mathbb{C}[x] $, $\mathcal{P}^{\ast}$ the vector space of all linear functionals on $\mathcal{P}$, and $\langle L|p(x)\rangle $ the action of the linear functional $L$ on the polynomial $p(x)$.
\par
It is well-known that the vector space operation on $\mathcal{P}^{\ast}$ is defined by
\begin{gather}
\langle L+M|p(x)\rangle=\langle L|
p(x)\rangle+\langle M|p(x)
\rangle\quad\text{and}\quad
\langle cL|p(x)\rangle=c\langle L|
p(x)\rangle,
\end{gather}
where $c$ is a complex constant.
\par
The formal power series is defined by
\begin{equation}
f(t)=\sum_{k=0}^\infty a_k\frac{t^k}{k!}\in\mathcal{F}
\end{equation}
which describes a linear functional on $\mathcal{P}$ as $\langle f(t)|x^n\rangle=a_{n}$ for all $n\ge 0$. In particular,
\begin{equation}
\bigl\langle t^k|x^n\bigr\rangle=n!\delta_{n,k},
\label{euation-2}
\end{equation}
where $\delta_{n,k}$ stands for the Kronecker delta. If we take
\begin{equation}
f_{L}(t)=\sum_{k=0}^\infty\bigl\langle L|x^k\bigr\rangle\frac{t^k}{k!},
\end{equation}
then
\begin{equation}
\langle f_{L}(t)|x^n\rangle=\langle
L|x^n\rangle.
\end{equation}
\par
Additionally, the map $L\to f_{L}(t)$ is a vector space isomorphism
from $\mathcal{P}^{\ast}$ onto $\mathcal{F}$. Henceforth, $\mathcal{F}$
will denote both the algebra of the formal power series in $t$ and the
vector space of all linear functionals on $\mathcal{P}$. So an element $f(t)$ of $\mathcal{F}$ will be thought of as both a formal power series and a linear functional and $\mathcal{F}$ will be called an umbral algebra.
\par
It is well-known that $\langle e^{yt}|x^n\rangle=y^n$, which implies that
\begin{equation}
\langle e^{yt}|p(x)\rangle=p(y).
\end{equation}
\par
We note that for all $f(t)$ in $\mathcal{F}$
\begin{equation}
f(t)=\sum_{k=0}^\infty\bigl\langle f(t)|x^k\bigr\rangle\frac{t^k}{k!} \label{euation-3}
\end{equation}
and for all polynomials $p(x)$
\begin{equation}
p(x)=\sum_{k=0}^\infty\bigl\langle t^k|p(x)\bigr\rangle\frac{x^k}{k!}. \label{euation-4}
\end{equation}
\par
The order $o(f(t))$ of the power series $f(t)\ne 0$ is the smallest integer $k$ for which $a_k $ does not vanish. We say that $o(f(t))=\infty $ if $f(t)=0$. It is clear that
\begin{equation*}
o(f(t)g(t))=o(f(t))+o(g(t))\quad\text{and}\quad o(f(t)+g(t))\ge\min\{ o(f(t)),o(g(t))\}.
\end{equation*}
\par
A series $f(t)$ has a multiplicative inverse, denoted by $f(t)^{-1}$ or $\frac{1}{f(t)}$, if $o(f(t))=0$. Such a series is called an invertible series. A series $f(t)$ satisfying $o(f(t))=1$ is called a delta series. For $f(t),g(t)\in\mathcal{F}$, we have
$$
\langle f(t)g(t)|p(x)\rangle=\langle f(t)|g(t)p(x)\rangle.
$$
A delta series $f(t)$ has a compositional inverse $\Bar{f}(t)$ such that
$$
f\bigl(\Bar{f}(t)\bigr)=\Bar{f}(f(t))=t.
$$
\par
By~\eqref{euation-3}, it follows that
\begin{equation}
p^{(k)}(x)=\frac{\td^kp(x)}{\td x^k}=\sum_{\ell=k}^\infty\frac{\bigl\langle t^{\ell}| p(x)\bigr\rangle}{\ell!}\prod_{i=0}^{k-1}(\ell-i)x^{\ell-k}\label{euation-5}
\end{equation}
and
\begin{equation}
p^{(k)}(0)=\bigl\langle t^k|p(x)\bigr\rangle=\bigl\langle 1|p^{(k)}(x)\bigr\rangle. \label{euation-6}
\end{equation}
The relation~\eqref{euation-5} implies that
\begin{equation}
t^kp(x)=p^{(k)}(x)=\frac{\td^kp(x)}{\td x^k} \label{euation-7}
\end{equation}
and
\begin{equation}
e^{yt}p(x)=p(x+y). \label{euation-8}
\end{equation}
\par
Let $S_{n}(x)$ denote a polynomial with degree $n$. Let $f(t)$ be a delta series and $g(t)$ an invertible series. Then there exists a unique sequence $S_{n}(x)$ such that
$$
\bigl\langle g(t)f^k(t)|S_{n}(x)\bigr\rangle=n!\delta_{n,k}
$$
for all $n,k\ge 0$. Such a sequence $S_{n}(x)$ is called a Sheffer sequence for $(g(t),f(t))$ or say that $S_{n}(t)$ is Sheffer for $(g(t),f(
t))$.
\par
The Sheffer sequence for $(1,f(t))$ is called an associated sequence for $f(t)$ or say that $S_{n}(x)$ is associated to $f(t)$. The Sheffer sequence for $(g(t),t)$ is called an Appell sequence for $g(t)$ or say that $S_{n}(x)$ is Appell for $g(t)$.
\par
Let $p(x)\in\mathcal{P}$. Then
\begin{equation*}
\langle f(t)|xp(x)\rangle=\langle\partial_tf(t)|p(x)\rangle=\langle f'(t)|p(x)\rangle
\end{equation*}
and
\begin{equation*}
\langle e^{ty}+1|p(x)\rangle=p(y)+p(0).
\end{equation*}
Let $S_{n}(x)$ be Sheffer for $(g(t),f(t))$. Then
\begin{align}
h(t)&=\sum_{k=0}^\infty\frac{\langle h(t)|S_k(x)\rangle}{k!}g(t)f^k(t), \quad h(t)\in\mathcal{F},  \\
p(x)&=\sum_{k=0}^\infty\frac{\langle g(t)f^k(t)|p(x)\rangle}{k!}S_k(x), \quad p(x)\in\mathcal{P},  \\
\frac{e^{y\Bar{f}(t)}}{g\bigl(\Bar{f}(t)\bigr)}&=\sum_{k=0}^\infty S_k(y)\frac{t^k}{k!}, \quad y\in\mathbb{C}, \label{euation-10}\\
f(t)S_{n}(x)&=nS_{n-1}(x).
\end{align}
Moreover, we have
\begin{equation}
\langle f_1(t)f_{2}(t)\dotsm f_{m}(t)|x^n\rangle=\sum\binom{n}{i_1, \dotsc,i_{m}}
\prod_{j=1}^m\bigl\langle f_j(t)|x^{i_j}\bigr\rangle,
\label{euation-33}
\end{equation}
where $f_1(t),f_{2}(t), \dotsc,f_{m}(t)\in\mathcal{F}$ and the sum is taken over all nonnegative integers $i_1, \dotsc,i_{m}$ such that $i_1+\dotsm+i_{m}=n$.
\par
For details on the above knowledge, please refer to~\cite{Araci-4, Dere-1, Kim-2, Kim-11, Kim-12, Maldonado, Roman} and plenty of references therein.
\par
Let $p$ be a fixed odd prime number. In what follows, we use $\mathbb{Z}_p$ to denote the ring of $p$-adic rational integers, $\mathbb{Q}$ the field of rational numbers, $\mathbb{Q}_p$ the field of $p$-adic rational numbers, and $\mathbb{C}_p$ the completion of algebraic closure of $\mathbb{Q}_p$. Let $\mathbb{N}$ be the set of natural numbers and $\mathbb{N}^{\ast}=\{0\}\cup\mathbb{N}$. The $p$-adic absolute value is defined by $\vert p\vert_p=p^{-1}$. We also assume that $\vert q-1\vert_p<1$ is an indeterminate. Let $UD(\mathbb{Z}_p)$ be the space of uniformly differentiable functions on $\mathbb{Z}_p$. For $f\in UD(\mathbb{Z}_p)$, the fermionic $p$-adic integral on $\mathbb{Z}_p$ is defined by Kim (see~\cite{NFAA-12-144.tex, NFAA-12-144.tex-arXiv, Serkan-Fasc.Math.2013, Serkan-Fasc.Math.arXiv}) as
\begin{equation}
I_{-1}(f)=\int_{\mathbb{Z}_p}f(a)\td\mu_{-1}(a)=\lim_{n\to\infty}\sum_{a=0}^{p^n-1}f(a)(-1)^a.
\label{euation-34}
\end{equation}
Hence, we have
\begin{equation}
I_{-1}(f_1)+I_{-1}(f)=2f(0), \label{equation 4}
\end{equation}
where $f_1(a)=f(a+1)$. For detailed information on these notions, see~\cite{Araci-2, Araci-1, Acikgoz, Kim-10, Kim-6, Kim-9, Kim-5, Kim-3, Kim-4, Kim-7, Kim-8}.
\par
Now let us consider Kim's $p$-adic fermionic integral on $\mathbb{Z}_p$. For $\vert 1-w\vert_p<1$,
\begin{equation}
I_{-1}^{w}(f)=\int_{\mathbb{Z}_p}w^af(a)\td\mu_{-1}(a)=\lim_{n\to\infty}\sum_{a=0}^{p^n-1}w^af(a) (-1)^a, \label{equation 47}
\end{equation}
where $I_{-1}^{w}(f)$ is the extended fermionic $p$-adic integral on $\mathbb{Z}_p$.
Letting $f(x)=e^{t(x+a)}\in UD(\mathbb{Z}_p)$ in this equation yields
\begin{equation}
\int_{\mathbb{Z}_p}w^ae^{t(x+a)}\td\mu_{-1}(a)=\frac{2}{we^{t}+1}e^{tx}=\sum_{n=0}^\infty E_{n,w}(x)\frac{t^n}{n!},
\label{equation 35}
\end{equation}
where $E_{n,w}(x)$ is the weighted Euler polynomials defined in~\cite{Kim-13}. Specially, the quantity $E_{n,w}(0)=E_{n,w}$ is the weighted Euler numbers. The relation between weighted Euler numbers and weighted Euler polynomials is given by
\begin{equation}
E_{n,w}(x)=\sum_{\ell=0}^n\binom{n}{\ell}x^{\ell}E_{n-\ell,w}=(x+E_{w})^n, \label{equation 36}
\end{equation}%
with the usual convention of replacing $(E_{w})^n$ by $E_{n,w}$. Combing this with~\eqref{equation 35} leads to
\begin{equation}
E_{n,w}=\int_{\mathbb{Z}_p}w^aa^n\td\mu_{-1}(a)\quad\text{and}\quad E_{n,w}(x)=\int_{\mathbb{Z}_p}w^a(x+a)^n\td\mu_{-1}(a). \label{euation-11}
\end{equation}
\par
In~\cite{Dere-1, Dere-2}, the authors studied applications of the umbral algebra to special functions. In~\cite{Kim-1}, the author gave some new interesting links to works of many mathematicians in the analytic number theory and the modern classical umbral calculus. In~\cite{Kim-2, Kim-11, Kim-12}, the authors established some properties of the umbral calculus for Frobenius-Euler polynomials, Euler polynomials, and other special functions. In~\cite{Kim-1}, the authors investigated some new applications of the umbral calculus associated with $p$-adic invariants integral on $\mathbb{Z}_p$.
\par
In this paper, by the same motivation as in~\cite{Kim-1} and using the extended fermionic $p$-adic integral on $\mathbb{Z}_p$, we will give some applications of the umbral calculus and, from these applications, derive some identities concerning weighted Euler numbers, weighted Euler polynomials, and weighted Euler polynomials of order $k$.

\section{On the extended fermionic $p$-adic integral on $\mathbb{Z}_p$}

Now we start out to state and prove our main results.

\begin{theorem}\label{Proposition=1}
If $n\ge 0$, then $E_{n,w}(x)$ is an Appell
sequence for $g(t)=\frac{we^{t}+1}{2}$.
\end{theorem}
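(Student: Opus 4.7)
The plan is to read off the Appell property directly from the generating function \eqref{equation 35} by matching it against the Sheffer-sequence generating formula \eqref{euation-10}, with $f(t)=t$.

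First I would check the two admissibility hypotheses needed to invoke the Sheffer framework. The series $f(t)=t$ is trivially a delta series with compositional inverse $\bar{f}(t)=t$. For the invertibility of $g(t)=\frac{we^{t}+1}{2}$, note that $g(0)=\frac{w+1}{2}$, which is nonzero because the standing assumption $\vert 1-w\vert_p<1$ forces $w$ to lie in a $p$-adic neighborhood of $1$, hence $w\ne-1$. Therefore $o(g(t))=0$ and $g(t)$ is an invertible series, so $(g(t),t)$ is a legitimate pair to index a Sheffer sequence.

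Next I would rewrite \eqref{equation 35} as
\begin{equation*}
\sum_{n=0}^{\infty}E_{n,w}(x)\frac{t^{n}}{n!}
=\frac{2\,e^{xt}}{we^{t}+1}
=\frac{e^{x\bar{f}(t)}}{g\bigl(\bar{f}(t)\bigr)},
\end{equation*}
which is precisely the shape of \eqref{euation-10} for the pair $(g(t),f(t))=(g(t),t)$. By the uniqueness of the Sheffer sequence determined by this generating function, it follows that $E_{n,w}(x)$ is Sheffer for $(g(t),t)$, and by the very definition of an Appell sequence recalled earlier in the preliminaries this is the same as saying that $E_{n,w}(x)$ is Appell for $g(t)=\frac{we^{t}+1}{2}$.

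There is no serious obstacle here; the content of the theorem is essentially a matching of two generating functions, and the only point that requires a moment of care is the verification that $g(t)$ is invertible, which rests on the $p$-adic hypothesis $\vert 1-w\vert_p<1$ ruling out $w=-1$. If desired, one could alternatively verify the defining orthogonality $\langle g(t)t^{k}\mid E_{n,w}(x)\rangle=n!\delta_{n,k}$ by computing
\begin{equation*}
\bigl\langle g(t)t^{k}\,\big|\,E_{n,w}(x)\bigr\rangle
=\Bigl\langle\tfrac{we^{t}+1}{2}\,\Big|\,t^{k}E_{n,w}(x)\Bigr\rangle
\end{equation*}
via \eqref{euation-7} and the $p$-adic integral representation \eqref{euation-11}, but the generating-function route above is the most direct and avoids any computation.
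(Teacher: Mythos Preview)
Your proof is correct and follows essentially the same approach as the paper: both arguments check that $g(t)$ is invertible and then identify the generating function \eqref{equation 35} with the Sheffer/Appell generating formula \eqref{euation-10} for the pair $(g(t),t)$. The paper phrases the conclusion by extracting the two relations $\frac{1}{g(t)}x^{n}=E_{n,w}(x)$ and $tE_{n,w}(x)=nE_{n-1,w}(x)$ separately, whereas you invoke the uniqueness of the Sheffer sequence directly, but this is only a cosmetic difference.
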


\begin{proof}
Suppose that $S_{n}(x)$ is an Appell sequence for $g(t)$. Then, by~\eqref{euation-10}, we have
\begin{equation}
\frac{1}{g(t)}x^n=S_{n}(x)\quad\text{if and only if}\quad x^n=g(t)S_{n}(x)
\label{euation-13}
\end{equation}
for $n\ge 0$.
Let
\begin{equation*}
g(t)=\frac{we^{t}+1}{2}\in\mathcal{F}.
\end{equation*}
It is clear that $g(t)$ is an invertible series. By~\eqref{euation-13}, we have
\begin{equation}
\sum_{n=0}^\infty E_{n,w}(x)\frac{t^n}{n!}=\frac{1}{g(t)}e^{xt}. \label{euation-14}
\end{equation}
This means that
\begin{equation}
\frac{1}{g(t)}x^n=E_{n,w}(x).
\label{euation-15}
\end{equation}
Making use of~\eqref{euation-10} gives
\begin{equation}
tE_{n,w}(x)=E'_{n,w}(x)=nE_{n-1,w}(x). \label{euation-16}
\end{equation}
Combining~\eqref{euation-15} and~\eqref{euation-16} results in Theorem~\ref{Proposition=1}.
\end{proof}

\begin{theorem}\label{Theorem=1}
Let $g(t)=\frac{we^{t}+1}{2}\in\mathcal{F}$.
Then for $n\ge 0$
\begin{equation}
E_{n+1,w}(x)=\biggl[x-\frac{g'(t)}{g(t)}\biggr]E_{n,w}(x). \label{euation-19}
\end{equation}
\end{theorem}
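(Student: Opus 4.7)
The plan is to derive the recursion from the Appell-sequence identity $E_{n,w}(x)=\frac{1}{g(t)}x^{n}$ established in Theorem~\ref{Proposition=1} by exploiting the commutator between the multiplication operator $x$ and the formal power series $g(t)$ acting on $\mathcal{P}$.

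First I would record the basic operator identity. From the relation $\langle f(t)|xp(x)\rangle=\langle f'(t)|p(x)\rangle$ stated in the preliminaries, together with~\eqref{euation-3}, one sees that, as operators on $\mathcal{P}$, the bracket $[f(t),x]=f'(t)$ holds for every $f(t)\in\mathcal{F}$; equivalently,
\begin{equation*}
xf(t)=f(t)x-f'(t).
\end{equation*}
This is the only non-trivial ingredient; everything else is a manipulation of~\eqref{euation-15}.

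Next I would run the computation. Starting from $g(t)E_{n,w}(x)=x^{n}$ (which is the ``only if'' direction of~\eqref{euation-13}), multiply on the left by $x$ to get $xg(t)E_{n,w}(x)=x^{n+1}$. Applying the commutator identity with $f(t)=g(t)$ yields
\begin{equation*}
g(t)xE_{n,w}(x)-g'(t)E_{n,w}(x)=x^{n+1}.
\end{equation*}
Now apply the invertible operator $\frac{1}{g(t)}$ to both sides and use~\eqref{euation-15} again to identify $\frac{1}{g(t)}x^{n+1}$ with $E_{n+1,w}(x)$. Solving for $E_{n+1,w}(x)$ produces
\begin{equation*}
E_{n+1,w}(x)=xE_{n,w}(x)-\frac{g'(t)}{g(t)}E_{n,w}(x)=\biggl[x-\frac{g'(t)}{g(t)}\biggr]E_{n,w}(x),
\end{equation*}
which is~\eqref{euation-19}.

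The only conceptual obstacle is justifying the commutator rule $xf(t)=f(t)x-f'(t)$ at the level of operators rather than just linear functionals; once this is in hand the argument is a two-line algebraic manipulation. As a sanity check one can verify that $g(t)=\frac{we^{t}+1}{2}$ is invertible (it has order $0$ since $g(0)=\frac{w+1}{2}\ne 0$ under the assumption $|1-w|_p<1$), so division by $g(t)$ in $\mathcal{F}$ is legitimate and the displayed quotient $\frac{g'(t)}{g(t)}$ is a well-defined element of $\mathcal{F}$ acting on $\mathcal{P}$.
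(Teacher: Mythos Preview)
Your proof is correct. It differs from the paper's argument in presentation rather than in substance: the paper differentiates the generating function $\frac{1}{g(t)}e^{xt}=\sum_{n\ge 0} E_{n,w}(x)\frac{t^n}{n!}$ with respect to $t$, obtaining $\frac{xg(t)e^{xt}-g'(t)e^{xt}}{g(t)^{2}}$, and then reinterprets the coefficients as $\bigl[x-\frac{g'(t)}{g(t)}\bigr]\frac{1}{g(t)}x^{n}$ before invoking~\eqref{euation-15}. You instead work purely at the operator level, using the commutator $xg(t)=g(t)x-g'(t)$ applied to the identity $g(t)E_{n,w}(x)=x^{n}$. The two computations are really the same phenomenon (the commutator rule is the operator shadow of $\partial_{t}e^{xt}=xe^{xt}$), but your route is a bit cleaner: it sidesteps the generating-function bookkeeping and the slightly delicate step of reading $x\cdot\frac{e^{xt}}{g(t)}$ back as a sum of operator actions on $x^{n}$.
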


\begin{proof}
By~\eqref{euation-11}, we derive that
\begin{equation*}
\sum_{n=1}^\infty E_{n,w}(x)\frac{t^n}{n!}=\frac{xg(t)e^{xt}-g'(t)e^{xt}}{g(t)^{2}} =\sum_{n=0}^\infty\biggl[x\frac{1}{g(t)}x^n-\frac{g'(t)}{g(t)}\frac{1}{g(t)}x^n\biggr]\frac{t^n}{n!}.
\end{equation*}
Considering~\eqref{euation-15} and the above equality, we discover
\begin{equation*}
E_{n+1,w}(x)=xE_{n,w}(x)-\frac{g'(t)}{g(t)}E_{n,w}(x).
\end{equation*}
Theorem~\ref{Theorem=1} is thus proved.
\end{proof}

\begin{theorem}
For $n\ge 0$,
\begin{equation}
E_{n+1,w}(x)=\biggl[x-\frac{g'(t)}{g(t)}\biggr]E_{n,w}(x),
\label{euation-23}
\end{equation}
where $g'(t)=\frac{\td g(t)}{\td t}$.
\end{theorem}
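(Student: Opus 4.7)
The statement here is identical to Theorem~\ref{Theorem=1}, which has already been established by differentiating the generating function~\eqref{euation-14}. Nonetheless, the plan is to offer an alternative proof that exploits the umbral-algebra structure directly, by means of the Pincherle-type commutation relation $[f(t),x]=f'(t)$ on $\mathcal{P}$. This route has the pedagogical advantage of making no appeal to the closed-form generating function, relying instead only on the Appell property recorded in Theorem~\ref{Proposition=1}.

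First I would translate the claim into the umbral algebra. By~\eqref{euation-15}, $E_{n,w}(x)=g(t)^{-1}x^n$, so the recurrence~\eqref{euation-23} is equivalent to
\begin{equation*}
g(t)^{-1}x^{n+1}=xg(t)^{-1}x^n-\frac{g'(t)}{g(t)}g(t)^{-1}x^n.
\end{equation*}
Thus the task reduces to computing the commutator $[g(t)^{-1},x]$ as an operator on $\mathcal{P}$, and the identity will follow by applying it to $x^n$.

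The key step is to establish $[f(t),x]=f'(t)$ for every $f(t)\in\mathcal{F}$. By~\eqref{euation-7}, $t(xp(x))=(xp(x))'=p(x)+xp'(x)=p(x)+x(tp(x))$, hence $[t,x]=1$; an easy induction then gives $[t^k,x]=kt^{k-1}$, and extending linearly and continuously across $\mathcal{F}$ yields the general rule. Specializing to $f(t)=g(t)^{-1}$, whose formal derivative is $-g'(t)/g(t)^2$, produces the operator identity
\begin{equation*}
g(t)^{-1}x=xg(t)^{-1}-\frac{g'(t)}{g(t)^{2}}.
\end{equation*}
Writing $x^{n+1}=x\cdot x^n$, applying $g(t)^{-1}$, invoking this commutation, and recognizing a second factor of $g(t)^{-1}$ acting on $x^n$ in the final term recovers~\eqref{euation-23}.

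The main obstacle I anticipate is a bookkeeping one: one must be careful that the product $\frac{g'(t)}{g(t)^{2}}$ arising as a Pincherle derivative is interpreted as multiplication in $\mathcal{F}$ and then as the corresponding operator on $\mathcal{P}$, and that regrouping $\frac{g'(t)}{g(t)^{2}}x^n=\frac{g'(t)}{g(t)}E_{n,w}(x)$ is legitimate because $\mathcal{F}$ is commutative. Once this identification is explicit, the derivation is mechanical and gives a conceptually distinct proof of the same recursion established in Theorem~\ref{Theorem=1}.
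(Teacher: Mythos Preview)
Your argument is correct. The Pincherle-type identity $[f(t),x]=f'(t)$ holds as you derive it (first for $t$, then for $t^k$ by induction, then termwise across $\mathcal{F}$), and specializing to $f(t)=g(t)^{-1}$ gives $g(t)^{-1}x=xg(t)^{-1}-g'(t)g(t)^{-2}$ as operators on $\mathcal{P}$. Applying this to $x^n$ and using $E_{m,w}(x)=g(t)^{-1}x^m$ from~\eqref{euation-15} yields~\eqref{euation-23} immediately. Your caveat about commutativity of $\mathcal{F}$ when regrouping $\dfrac{g'(t)}{g(t)^{2}}x^n=\dfrac{g'(t)}{g(t)}E_{n,w}(x)$ is the right thing to flag, and it is indeed harmless.

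This is a genuinely different route from the paper's. The paper's argument for this restatement first derives the reflection identity $wE_{n,w}(x+1)+E_{n,w}(x)=2x^n$ from the generating function, then multiplies the already-proved recurrence of Theorem~\ref{Theorem=1} through by $g(t)=\tfrac{we^t+1}{2}$, expands via $e^tE_{n,w}(x)=E_{n,w}(x+1)$, and checks consistency with the reflection identity. In effect the paper's second proof leans on Theorem~\ref{Theorem=1} (itself obtained by differentiating the closed-form generating function in $t$) together with the shift-operator interpretation of $e^t$. Your proof, by contrast, never touches the generating function or the shift identity: it uses only the Appell relation $E_{n,w}(x)=g(t)^{-1}x^n$ and the purely algebraic commutator rule. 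The payoff is a cleaner, structure-driven derivation that would transfer verbatim to any Appell sequence for any invertible $g(t)$; the paper's route, on the other hand, has the side benefit of exhibiting the reflection identity~\eqref{euation-21} along the way.
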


\begin{proof}
From~\eqref{euation-11}, it is easy to see that
\begin{equation*}
\sum_{n=0}^\infty[wE_{n,w}(x+1)+E_{n,w}(x)]\frac{t^n}{n!}=\sum_{n=0}^\infty (2x^n)\frac{t^n}{n!}.
\end{equation*}
Comparing the coefficients on the both sides, we find
\begin{equation}
wE_{n,w}(x+1)+E_{n,w}(x)=2x^n.
\label{euation-21}
\end{equation}
From Theorem~\ref{Theorem=1}, it follows that
\begin{equation}
g(t)E_{n+1,w}(x)=g(t)xE_{n,w}(x)-g'(t)E_{n,w}(x) \label{euation-22}
\end{equation}
and
\begin{equation*}
(we^{t}+1)E_{n+1,w}(x)=(we^{t}+1)xE_{n,w}(x)-we^{t}E_{n,w}(x).
\end{equation*}
Consequently, we have
\begin{multline*}
wE_{n+1,w}(x+1)+E_{n+1,w}(x)\\
=w(x+1)E_{n,w}(x+1)+xE_{n,w}(x)-wE_{n,w}(x+1). \label{equation 37}
\end{multline*}
Combining this with~\eqref{euation-21} and~\eqref{euation-22}, we acquire the required conclusions.
\end{proof}

\begin{corollary}
\bigskip For $n\ge 0$, we have
\begin{equation*}
wE_{n+1}(x+1)+E_{n+1,w}(x)=2x^{n+1}.
\end{equation*}
\end{corollary}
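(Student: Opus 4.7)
The plan is to observe that the corollary is essentially an index shift of equation~\eqref{euation-21}, which was already established in the proof of the preceding theorem. Reading the corollary's statement as $wE_{n+1,w}(x+1)+E_{n+1,w}(x)=2x^{n+1}$ (the subscript $w$ on the first $E$ appears to have been dropped typographically, as nothing earlier in the excerpt introduces unweighted Euler polynomials), the result follows immediately from~\eqref{euation-21} by substituting $n+1$ in place of $n$. No fresh calculation is needed: the corollary merely re-displays~\eqref{euation-21} at the next index.

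For clarity and self-containment I would also give a one-line direct derivation from the fermionic integral, independent of the preceding theorem. Set $f(a)=w^{a}(x+a)^{n+1}$, so that $f_{1}(a)=f(a+1)=w\cdot w^{a}(x+a+1)^{n+1}$. Applying the fundamental recurrence~\eqref{equation 4} yields
\begin{equation*}
I_{-1}(f_{1})+I_{-1}(f)=2f(0)=2x^{n+1},
\end{equation*}
and then~\eqref{euation-11} identifies $I_{-1}(f)=E_{n+1,w}(x)$ and $I_{-1}(f_{1})=wE_{n+1,w}(x+1)$, which is exactly the asserted identity.

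The main obstacle is not mathematical but interpretive: one must decide that $E_{n+1}(x+1)$ is shorthand for $E_{n+1,w}(x+1)$. Once that is granted, there is no work to do beyond citing~\eqref{euation-21} with the index advanced by one. I would therefore keep the proof to two or three lines, either by a direct appeal to~\eqref{euation-21} or by the short integral computation above.
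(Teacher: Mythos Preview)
Your reading is correct: the paper gives no separate proof of the corollary, and it is exactly equation~\eqref{euation-21} from the preceding theorem's proof with $n$ replaced by $n+1$ (and yes, $E_{n+1}(x+1)$ is a typographical slip for $E_{n+1,w}(x+1)$). Your direct fermionic-integral derivation via~\eqref{equation 4} is a clean alternative, but the paper's intended argument is precisely the index shift you identify first.
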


\begin{theorem}
For $n\ge 0$, we have
\begin{align}
\langle f(t)|p(x)\rangle&=\int_{\mathbb{Z}_p}w^ap(a)\td\mu_{-1}(a), \label{euation-24}\\
\biggl\langle\frac{2}{we^{t}+1}\bigg|p(x)\biggr\rangle&=\int_{\mathbb{Z}_p}w^ap(a)\td\mu_{-1}(a), \label{equation=41}\\
E_{n,w}&=\biggl\langle\int_{\mathbb{Z}_p}w^ae^{at}\td\mu_{-1}(a)\bigg|x^n\biggr\rangle. \label{equation=42}
\end{align}
\end{theorem}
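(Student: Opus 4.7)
The plan is to treat all three identities uniformly: the first two are literally the same statement once one spots that the $f(t)$ on the left of \eqref{euation-24} must be read as the formal power series $f(t)=\int_{\mathbb{Z}_p}w^{a}e^{at}\td\mu_{-1}(a)$, and the third identity is just the explicit form of that same pairing evaluated on monomials $x^{n}$. So the whole theorem reduces to identifying a single generating function and evaluating a single pairing on the basis $\{x^n\}$.

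First I would invoke \eqref{equation 35} with $x=0$ to record the identity
\begin{equation*}
\int_{\mathbb{Z}_p}w^{a}e^{at}\td\mu_{-1}(a)=\frac{2}{we^{t}+1}=\sum_{n=0}^\infty E_{n,w}\frac{t^n}{n!},
\end{equation*}
which simultaneously tells us what $f(t)$ is in \eqref{euation-24} and also shows $f(t)\in\mathcal{F}$ with $a_n=E_{n,w}$.

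Next I would compute both sides of \eqref{equation=41} on the basis $\{x^{n}\}$ and then extend by linearity. For the right-hand side, \eqref{euation-11} gives $\int_{\mathbb{Z}_p}w^{a}a^{n}\td\mu_{-1}(a)=E_{n,w}$. For the left-hand side, the defining property of a formal power series acting as a linear functional, namely $\langle\sum_{k\ge 0}a_{k}t^{k}/k!\,|\,x^{n}\rangle=a_{n}$, applied to $\tfrac{2}{we^{t}+1}=\sum_{k\ge 0}E_{k,w}t^{k}/k!$ yields exactly $E_{n,w}$. Since both sides of \eqref{equation=41} are $\mathbb{C}$-linear in $p(x)$ and agree on every monomial, they agree on all of $\mathcal{P}$; the same computation proves \eqref{euation-24}. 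Identity \eqref{equation=42} is then read off by specializing $p(x)=x^n$ and using the generating-function identification from the first step.

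There is no real obstacle, only a notational one: the statement \eqref{euation-24} introduces $f(t)$ without a formula, and the whole point of the proof is to realize that the intended $f(t)$ is the extended fermionic $p$-adic integral generating function $\frac{2}{we^{t}+1}$. Once this is pinned down, the argument is purely a linearity-plus-basis verification using \eqref{euation-11}, \eqref{euation-3}, and \eqref{equation 35}.
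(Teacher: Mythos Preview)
Your argument is correct and is essentially the same as the paper's: both reduce everything to the single computation $\langle f(t)\,|\,x^n\rangle=\int_{\mathbb{Z}_p}w^{a}a^{n}\td\mu_{-1}(a)=E_{n,w}$ via~\eqref{euation-11} and the identification $\int_{\mathbb{Z}_p}w^{a}e^{at}\td\mu_{-1}(a)=\frac{2}{we^{t}+1}$ from~\eqref{equation 35}. The only cosmetic difference is direction: the paper \emph{defines} $f(t)$ by the functional identity~\eqref{euation-24} and then uses~\eqref{euation-3} to recover its power-series form, whereas you start from the series and verify the functional identity on monomials; these are the same computation read forwards and backwards.
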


\begin{proof}
Let us consider the linear functional $f(t)$ satisfying
\begin{equation}
\langle f(t)|p(x)\rangle=\int_{\mathbb{Z}_p}w^ap(a)\td\mu_{-1}(a) \label{equation=38}
\end{equation}
for all polynomials $p(x)$. Then we readily see that
\begin{equation*}
f(t)=\sum_{n=0}^\infty\frac{\langle f(t)
|x^n\rangle}{n!}t^n=\sum_{n=1}^\infty\biggl[\int_{\mathbb{Z}_p}w^aa^n\td\mu_{-1}(a)\biggr] \frac{t^n}{n!}=\int_{\mathbb{Z}_p}w^ae^{at}\td\mu_{-1}(a).
\end{equation*}
Thus, we have
\begin{equation}
f(t)=\int_{\mathbb{Z}_p}w^ae^{at}\td\mu_{-1}(a)=\frac{2}{we^{t}+1}.
\label{equation 40}
\end{equation}
Therefore, by~\eqref{equation=38} and~\eqref{equation 40}, we arrive at the theorem.
\end{proof}

\begin{theorem}
For $p(x)\in\mathcal{P}$, we have
\begin{equation}\label{euation-27}
\int_{\mathbb{Z}_p}w^ap(x+a)\td\mu_{-1}(a)=\int_{\mathbb{Z}_p}w^ae^{at}\td\mu_{-1}(a)p(x)
=\frac{2}{we^{t}+1}p(x).
\end{equation}
Equivalently,
\begin{equation}
E_{n,w}(x)=\int_{\mathbb{Z}_p}w^ae^{at}\td\mu_{-1}(a)x^n=\frac{2}{we^{t}+1}x^n. \label{equation 43}
\end{equation}
\end{theorem}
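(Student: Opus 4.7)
The plan is to reduce to the monomial case $p(x)=x^n$, since all three expressions in~\eqref{euation-27} are linear in $p$ and every polynomial is a finite $\mathbb{C}$-linear combination of monomials; thus establishing~\eqref{equation 43} for every $n\ge 0$ is already equivalent to~\eqref{euation-27}.

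For $p(x)=x^n$, the leftmost member of~\eqref{euation-27} becomes $\int_{\mathbb{Z}_p}w^a(x+a)^n\td\mu_{-1}(a)$, which equals $E_{n,w}(x)$ by the second identity in~\eqref{euation-11}. For the rightmost member, I would invoke~\eqref{equation 40} to rewrite $\int_{\mathbb{Z}_p}w^ae^{at}\td\mu_{-1}(a)$ as the formal power series $\frac{2}{we^{t}+1}=\sum_{k=0}^\infty E_{k,w}\frac{t^k}{k!}$ in $\mathcal{F}$, and then interpret its action on $x^n$ through~\eqref{euation-7}, which says $t^k p(x)=\frac{\td^k p(x)}{\td x^k}$. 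This produces
\[
\frac{2}{we^{t}+1}\,x^n=\sum_{k=0}^n\frac{E_{k,w}}{k!}\cdot\frac{n!}{(n-k)!}x^{n-k}=\sum_{k=0}^n\binom{n}{k}E_{k,w}x^{n-k},
\]
which coincides with $E_{n,w}(x)$ by~\eqref{equation 36}. Hence the three members of~\eqref{euation-27} agree on every monomial and therefore on every $p(x)\in\mathcal{P}$.

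A more conceptual route uses~\eqref{euation-8}, which reads $e^{at}p(x)=p(x+a)$; formally interchanging the $p$-adic integration with the umbral action would yield the chain of equalities in~\eqref{euation-27} in a single line, but rigorously justifying that interchange still reduces to the monomial check above. The main subtlety is keeping the two roles of the symbol $t$ straight: inside $\int_{\mathbb{Z}_p}w^ae^{at}\td\mu_{-1}(a)$ the variable $t$ is a formal parameter generating the sequence $\{E_{n,w}\}$ through~\eqref{equation 40}, whereas outside the integral $t$ acts on polynomials as a differentiation operator via~\eqref{euation-7}. Once this double interpretation is honored, the theorem drops out immediately.
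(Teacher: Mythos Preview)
Your argument is correct. The route you take as your \emph{primary} proof is, however, not the one the paper uses: you expand the operator $\frac{2}{we^{t}+1}=\sum_{k\ge 0}E_{k,w}\frac{t^k}{k!}$ and let it act on $x^n$ via~\eqref{euation-7}, then recognize the resulting binomial sum as $E_{n,w}(x)$ through~\eqref{equation 36}. The paper instead works at the generating-function level: it writes
\[
\sum_{n\ge 0}\Bigl[\int_{\mathbb{Z}_p}w^a(x+a)^n\td\mu_{-1}(a)\Bigr]\frac{t^n}{n!}
=\int_{\mathbb{Z}_p}w^ae^{(x+a)t}\td\mu_{-1}(a)
=\sum_{n\ge 0}\Bigl[\int_{\mathbb{Z}_p}w^ae^{at}\td\mu_{-1}(a)\,x^n\Bigr]\frac{t^n}{n!}
\]
and compares coefficients, invoking~\eqref{euation-11}. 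That is precisely the ``conceptual route'' you sketch at the end using~\eqref{euation-8}; the paper simply adopts it as the main argument rather than as an aside. Your explicit computation has the advantage of making the umbral action of $t$ completely transparent and of not relying on an interchange of the $p$-adic integral with the formal sum, while the paper's version is shorter and avoids any appeal to~\eqref{equation 36}.
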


\begin{proof}
From~\eqref{euation-11} and~\eqref{equation=42}, we see that
\begin{equation}
\begin{split}
\sum_{n=0}^\infty \biggl[\int_{\mathbb{Z}_p}w^a(x+a)^n\td\mu_{-1}(a)\biggr] \frac{t^n}{n!}
&=\int_{\mathbb{Z}_p}w^ae^{(x+a)t}\td\mu_{-1}(a)\\
&=\sum_{n=0}^\infty\biggl[\int_{\mathbb{Z}_p}w^ae^{at}\td\mu_{-1}(a)x^n\biggr]\frac{t^n}{n!}. \label{euation-25}
\end{split}
\end{equation}
By this equality and~\eqref{euation-11}, we see that for $n\in\mathbb{N}^{\ast}$
\begin{equation}
E_{n,w}(x)=\int_{\mathbb{Z}_p}(x+a)^n\td\mu_{-1}(a)=\int_{\mathbb{Z}_p}w^ae^{at}\td\mu_{-1}(a) x^n. \label{euation-26}
\end{equation}
As a result, we obtain the theorem.
\end{proof}

\begin{theorem}\label{indeterminate-thm}
For $p(x)\in\mathcal{P}$ and $k\in\mathbb{N}$, we have
\begin{multline}
\underset{\text{$k$-times}}{\underbrace{\int_{\mathbb{Z}_p} \dotsm\int_{\mathbb{Z}_p}}}w^{a_1+\dotsm+a_k}p(a_1+\dotsm+a_k+x)\prod_{j=1}^k\td\mu_{-1}(a_j)\\*
=\biggl(\frac{2}{we^{t}+1}\biggr)^kp(x). \label{equation 31}
\end{multline}
In particular,
\begin{align*}
E_{n,w}^{(k)}(x)&=\biggl(\frac{2}{we^{t}+1}\biggr)^kx^n\\
&=x^n\int_{\mathbb{Z}_p}\dotsm\int_{\mathbb{Z}_p}w^{a_1+\dotsm+a_k}e^{(a_1+\dotsm+a_k)t} \prod_{j=1}^k\td\mu_{-1}(a_j).
\end{align*}
Consequently,
\begin{equation*}
E_{n,w}^{(k)}(x)\sim \biggl(\biggl(\frac{we^{t}+1}{2}\biggr)^k,t\biggr).
\end{equation*}
\end{theorem}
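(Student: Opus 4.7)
The plan is a short induction on $k$ for~\eqref{equation 31}, followed by specializing $p(x)=x^n$ and reading off the Sheffer classification. The base case $k=1$ is exactly the identity~\eqref{euation-27} from the preceding theorem. For the inductive step, I would peel off the innermost integration against $a_k$: writing $y:=a_1+\dotsb+a_{k-1}+x$, the inner integral becomes $\int_{\mathbb{Z}_p}w^{a_k}p(y+a_k)\,\td\mu_{-1}(a_k)=\tfrac{2}{we^{t}+1}p(y)$ by~\eqref{euation-27}. Setting $q(z):=\tfrac{2}{we^{t}+1}p(z)$, which is again a polynomial since $(we^{t}+1)/2$ is invertible in $\mathcal{F}$, the outer $(k-1)$-fold integral takes exactly the form of~\eqref{equation 31} with $q$ in place of $p$, so the inductive hypothesis gives $\bigl(\tfrac{2}{we^{t}+1}\bigr)^{k-1}q(x)=\bigl(\tfrac{2}{we^{t}+1}\bigr)^{k}p(x)$, as desired.

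For the particular case, setting $p(x)=x^n$ in~\eqref{equation 31} and comparing with the generating function definition $\bigl(\tfrac{2}{we^{t}+1}\bigr)^{k}e^{xt}=\sum_n E_{n,w}^{(k)}(x)t^n/n!$ of the weighted Euler polynomials of order $k$ gives $E_{n,w}^{(k)}(x)=\bigl(\tfrac{2}{we^{t}+1}\bigr)^k x^n$; the integral representation then follows by factoring $w^{a_1+\dotsb+a_k}e^{(a_1+\dotsb+a_k)t}=\prod_j w^{a_j}e^{a_jt}$ and applying~\eqref{equation 35} with $x=0$ to each factor. For the Sheffer classification, applying the adjoint rule $\langle f(t)g(t)\mid p(x)\rangle=\langle f(t)\mid g(t)p(x)\rangle$ together with the identity $\bigl(\tfrac{we^{t}+1}{2}\bigr)^k E_{n,w}^{(k)}(x)=x^n$ yields
$$
\biggl\langle\biggl(\frac{we^{t}+1}{2}\biggr)^{\!k}t^m\,\bigg|\,E_{n,w}^{(k)}(x)\biggr\rangle=\bigl\langle t^m\bigm|x^n\bigr\rangle=n!\,\delta_{n,m}
$$
by~\eqref{euation-2}, which is precisely the defining property of a Sheffer sequence for the pair $\bigl(((we^{t}+1)/2)^k,t\bigr)$.

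The main obstacle is the operator-commutation step within the induction: the result of the inner integration is $2/(we^{t}+1)$ acting on $p$ with $t=\partial_y$ in the dummy variable $y=a_1+\dotsb+a_{k-1}+x$, and to apply the inductive hypothesis one must argue that this operator can equivalently be interpreted as acting in $x$ and slid past the remaining $(k-1)$ outer integrations. This is legitimate because $p$ depends on $a_1,\dotsc,a_{k-1},x$ only through the single sum $y$, so termwise $\partial_y$ and $\partial_x$ agree on $p$; once this identification is articulated, the remaining steps are routine.
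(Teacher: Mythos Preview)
Your argument is correct, but it follows a different path from the paper's. The paper does not induct on $k$; instead it works directly with the exponential generating function, computing the $k$-fold integral of $w^{a_1+\dotsb+a_k}e^{(a_1+\dotsb+a_k+x)t}$ by factoring it as $e^{xt}\prod_j\int_{\mathbb{Z}_p}w^{a_j}e^{a_jt}\,\td\mu_{-1}(a_j)=\bigl(\tfrac{2}{we^t+1}\bigr)^k e^{xt}$, which simultaneously defines $E_{n,w}^{(k)}(x)$ and proves~\eqref{equation 31} for $p(x)=x^n$ (hence for all $p$ by linearity). Along the way the paper records the multinomial expansion $E_{n,w}^{(k)}=\sum\binom{n}{i_1,\dotsc,i_k}\prod_j E_{i_j,w}$ and the binomial convolution $E_{n,w}^{(k)}(x)=\sum_\ell\binom{n}{\ell}x^\ell E_{n-\ell,w}^{(k)}$, then concludes the Appell property from $\tfrac{1}{g^{(k)}(t)}x^n=E_{n,w}^{(k)}(x)$ together with $tE_{n,w}^{(k)}(x)=nE_{n-1,w}^{(k)}(x)$, rather than from the orthogonality pairing you use. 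Your inductive peeling is cleaner and reuses the $k=1$ case as a black box, with the only delicate point---that $\partial_y$ and $\partial_x$ agree on functions of the sum $y=a_1+\dotsb+a_{k-1}+x$---explicitly handled; the paper's route is more self-contained and produces the auxiliary multinomial and binomial identities as byproducts.
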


\begin{proof}
For $\vert 1-w\vert_p<1$, we consider the weighted Euler polynomials of order $k$. \begin{multline}\label{equation=44}
\underset{\text{$k$-times}}{\underbrace{\int_{\mathbb{Z}_p} \dotsm\int_{\mathbb{Z}_p}}}w^{a_1+\dotsm+a_k}e^{(a_1+\dotsm+a_k+x) t}\prod_{j=1}^k\td\mu_{-1}(a_j)\\
=\biggl(\frac{2}{we^{t}+1}\biggr)^ke^{xt}
=\sum_{n=0}^\infty E_{n,w}^{(k)}(x)\frac{t^n}{n!}.
\end{multline}
where $E_{n,w}^{(k)}(0)=E_{n,w}^{(k)}$ are the weighted Euler numbers of order $k$.
Accordingly,
\begin{multline}
\underset{\text{$k$-times}}{\underbrace{\int_{\mathbb{Z}_p}\dotsm \int_{\mathbb{Z}_p}}}w^{a_1+\dotsm+a_k}(a_1+\dotsm+a_k)^n\prod_{j=1}^k\td\mu_{-1}(a_j)
\label{euation-28}\\
=\sum_{i_1+\dotsm+i_k=n}\binom{n}{i_1, \dotsc,i_{m}} \prod_{j=1}^k\int_{\mathbb{Z}_p}w^{a_j}a_j^{i_j}\td\mu_{-1}(a_j)\\
=\sum_{i_1+\dotsm+i_k=n}\binom{n}{i_1, \dotsc,i_{m}}\prod_{j=1}^kE_{i_j,w}
=E_{n,w}^{(k)}.
\end{multline}
Thanks to~\eqref{equation=44} and~\eqref{euation-28}, we have
\begin{equation}
E_{n,w}^{(k)}(x)=\sum_{\ell=0}^n\binom{n}{\ell}x^{\ell}E_{n-\ell,w}^{(k)}. \label{equation 45}
\end{equation}
From~\eqref{euation-28} and~\eqref{equation 45}, we notice that $E_{n,w}^{(k)}(x)$ is a monic polynomial of degree $n$ with coefficients in $\mathbb{Q}$. For $k\in\mathbb{N}$, let us assume that
\begin{multline}\label{euation-29}
g^{(k)}(t)=\Biggl[ {{\int_{\mathbb{Z}_p}\dotsm\int_{\mathbb{Z}_p}}} w^{a_1+\dotsm+a_k}e^{(a_1+\dotsm+a_k)t} \prod_{j=1}^k\td\mu_{-1}(a_j)\Biggr]^{-1}\\
=\biggl(\frac{we^{t}+1}{2}\biggr)^k.
\end{multline}
From this, we see that $g^{(k)}(t)$ is an invertible series. Due to~\eqref{equation=44} and~\eqref{euation-29}, we readily derive that
\begin{align*}
\frac{1}{g^{(k)}(t)}e^{xt}&=\underset{\text{$k$-times}} {\underbrace{\int_{\mathbb{Z}_p}\dotsm\int_{\mathbb{Z}_p}}}w^{a_1+\dotsm+a_k} e^{(a_1+\dotsm+a_k+x)t}\prod_{j=1}^k\td\mu_{-1}(a_j)\\
&=\sum_{n=0}^\infty E_{n,w}^{(k)}(x)\frac{t^n}{n!}.
\end{align*}
Taking account of this and
\begin{equation}
tE_{n,w}^{(k)}(x)=nE_{n-1,w}^{(k)}(x) \label{equation 46}
\end{equation}
yields that $E_{n,w}^{(k)}(x)$ is an Appell sequence for $g^{(k)}(t)$. Theorem~\ref{indeterminate-thm} is proved.
\end{proof}

\begin{theorem}\label{back-thm-okay}
For $p(x)\in\mathcal{P}$, we have
\begin{multline}
\biggl\langle\int_{\mathbb{Z}_p}\dotsm\int_{\mathbb{Z}_p} w^{a_1+\dotsm+a_k}e^{(a_1+\dotsm+a_k) t}\prod_{j=1}^k\td\mu_{-1}(a_j)\biggl|p(x)\biggr\rangle\\
=\int_{\mathbb{Z}_p}\dotsm\int_{\mathbb{Z}_p}w^{a_1+\dotsm+a_k}p(a_1+\dotsm+a_k) \prod_{j=1}^k\td\mu_{-1}(a_j).
\end{multline}
Furthermore,
\begin{equation*}
\biggl\langle \biggl(\frac{2}{we^{t}+1}\biggr)^k\bigg|p(x)\biggr\rangle =\int_{\mathbb{Z}_p}\dotsm\int_{\mathbb{Z}_p}w^{a_1+\dotsm+a_k}p(a_1+\dotsm+a_k) \prod_{j=1}^k\td\mu_{-1}(a_j),
\end{equation*}
equivalently,
\begin{equation*}
E_{n,w}^{(k)}=\biggl\langle\int_{\mathbb{Z}_p}\dotsm\int_{\mathbb{Z}_p}w^{a_1+\dotsm+a_k} e^{(a_1+\dotsm+a_k)t}\prod_{j=1}^k\td\mu_{-1}(a_j)\bigg|x^n\biggr\rangle.
\end{equation*}
\end{theorem}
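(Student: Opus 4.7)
The plan is to repeat, in the $k$-fold setting, the argument already carried out for \eqref{euation-24}--\eqref{equation=42} in the single-integral case. The core step is to identify the iterated integral, viewed as a formal power series in $t$, with the generating function of the weighted Euler numbers of order $k$, and then to invoke the umbral pairing \eqref{euation-2}. First, I would set
$$F(t) = \int_{\mathbb{Z}_p}\dotsm\int_{\mathbb{Z}_p} w^{a_1+\dotsm+a_k} e^{(a_1+\dotsm+a_k)t}\prod_{j=1}^k\td\mu_{-1}(a_j),$$
expand $e^{(a_1+\dotsm+a_k)t}$ as a formal power series in $t$, and pull the iterated (finite-limit) integrals through the sum. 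Using \eqref{euation-28} to evaluate each coefficient, this yields $F(t) = \sum_{n=0}^\infty E_{n,w}^{(k)} t^n/n!$, which via \eqref{euation-29} also identifies $F(t)$ with $\bigl(2/(we^{t}+1)\bigr)^k$ inside $\mathcal{F}$. Pairing $F(t)$ with $x^n$ via \eqref{euation-2} then delivers $\langle F(t)|x^n\rangle = E_{n,w}^{(k)}$, which is the last displayed identity of the theorem.

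Next, I would extend this to an arbitrary polynomial $p(x) = \sum_n c_n x^n \in \mathcal{P}$ using bilinearity of the umbral pairing. The left-hand side becomes $\langle F(t)|p(x)\rangle = \sum_n c_n \langle F(t)|x^n\rangle = \sum_n c_n E_{n,w}^{(k)}$. On the right-hand side, applying the multinomial-style decomposition of \eqref{euation-28} term by term shows
$$\int_{\mathbb{Z}_p}\dotsm\int_{\mathbb{Z}_p} w^{a_1+\dotsm+a_k}\, p(a_1+\dotsm+a_k)\prod_{j=1}^k\td\mu_{-1}(a_j) = \sum_{n} c_n E_{n,w}^{(k)},$$
so the two sides agree, which is the first displayed formula. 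The middle displayed formula then follows immediately by substituting the already-established identity $F(t) = \bigl(2/(we^{t}+1)\bigr)^k$.

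The argument presents no substantive obstacle, since it is essentially a clean transcription of the $k=1$ proof with the single fermionic integral replaced by its $k$-fold analogue. The only place demanding a little care is the verification that $F(t)$ genuinely lies in $\mathcal{F}$, i.e., that each coefficient $\int_{\mathbb{Z}_p}\dotsm\int_{\mathbb{Z}_p} w^{a_1+\dotsm+a_k}(a_1+\dotsm+a_k)^n \prod_{j=1}^k \td\mu_{-1}(a_j)$ is a well-defined complex number; but this is exactly what \eqref{euation-28} provides, the right-hand side $E_{n,w}^{(k)}$ being a rational (hence complex) number as noted in the proof of Theorem~\ref{indeterminate-thm}.
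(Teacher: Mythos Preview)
Your proposal is correct and follows essentially the same approach as the paper's own proof: both identify the iterated integral with the formal power series $\sum_{n\ge0}E_{n,w}^{(k)}t^n/n!=(2/(we^t+1))^k$ via \eqref{euation-28}, and then use the duality between $\mathcal{F}$ and $\mathcal{P}$ together with linearity. The only cosmetic difference is that the paper starts by \emph{defining} the linear functional through the right-hand integral and then reconstructs its power series using \eqref{euation-3}, whereas you start from the power series $F(t)$ and pair it with $x^n$; the two directions are inverse to one another and amount to the same argument.
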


\begin{proof}
Let us take the linear functional $f^{(k)}(t)$ fulfilling
\begin{equation}
\bigl\langle f^{(k)}(t)|p(x)\bigr\rangle=\int_{\mathbb{Z}_p}\dotsm \int_{\mathbb{Z}_p}w^{a_1+\dotsm+a_k}p(a_1+\dotsm+a_k)\prod_{j=1}^k\td\mu_{-1}(a_j)
\label{euation-32}
\end{equation}
for all polynomials $p(x)$. Then
\begin{align*}
f^{(k)}(t)&=\sum_{n=0}^\infty\frac{\bigl\langle f^{(k)}(t)|x^n\bigr\rangle}{n!}t^n\\
&=\sum_{n=0}^\infty\Biggl[\int_{\mathbb{Z}_p}\dotsm\int_{\mathbb{Z}_p}w^{a_1+\dotsm+a_k} (a_1+\dotsm+a_k)^n\prod_{j=1}^k\td\mu_{-1}(a_j)\Biggr]\frac{t^n}{n!}\\
&=\int_{\mathbb{Z}_p}\dotsm\int_{\mathbb{Z}_p}w^{a_1+\dotsm+a_k}e^{(a_1+\dotsm+a_k)t} \prod_{j=1}^k\td\mu_{-1}(a_j)\\
&=\biggl(\frac{2}{we^{t}+1}\biggr)^k.
\end{align*}
Therefore, we procure Theorem~\ref{back-thm-okay}.
\end{proof}

\begin{remark}
From~\eqref{euation-33}, we notice that
\begin{multline*}
\biggl\langle\int_{\mathbb{Z}_p}\dotsm\int_{\mathbb{Z}_p}w^{a_1+\dotsm+a_k}e^{(a_1+\dotsm+a_k) t}\prod_{j=1}^k\td\mu_{-1}(a_j)\bigg|x^n\biggr\rangle\\
=\sum_{i_1+\dotsm+i_k=n}\binom{n}{i_1, \dotsc,i_{k}} \prod_{\ell=1}^{k} \biggl\langle\int_{\mathbb{Z}_p}w^{a_\ell}e^{a_\ell t}\td\mu_{-1}(a_\ell)\bigg| x^{i_\ell}\biggr\rangle.
\end{multline*}
Therefore, we have
\begin{equation*}
E_{n,w}^{(k)}=\sum_{i_1+\dotsm+i_k=n}\binom{n}{i_1, \dotsc,i_{k}}E_{i_1,w}\dotsm E_{i_k,w}.
\end{equation*}
\end{remark}

\begin{remark}
Our applications to the weighted Euler polynomials, the weighted Euler numbers, and the weighted Euler polynomials of order $k$ seem to be interesting, because evaluating at $w=1$ leads to Euler polynomials and Euler polynomials of order $k$ defined respectively by
\begin{equation*}
\sum_{n=0}^\infty E_{n}(x)\frac{t^n}{n!}=\frac{2}{e^{t}+1}e^{xt}\quad \text{and}\quad
\sum_{n=0}^\infty E_{n}^{(k)}(x)\frac{t^n}{n!}=\biggl(\frac{2}{e^{t}+1}\biggr)^ke^{xt}.
\end{equation*}
It is also well known that
\begin{equation*}
E_{n}(x)=\int_{\mathbb{Z}_p}(x+a)^n\td\mu_{-1}(a)
\end{equation*}
and
\begin{equation*}
E_{n}^{(k)}(x) =\int_{\mathbb{Z}_p}\dotsm\int_{\mathbb{Z}_p} (a_1+\dotsm+a_k+x)^n\prod_{j=1}^k\td\mu_{-1}(a_j).
\end{equation*}
See~\cite{Araci-2, Araci-3, Acikgoz, Kim-6, Kim-3} and related references therein.
\end{remark}

\end{document}